\documentclass[11pt, reqno]{amsart}
\setlength{\textwidth}{460pt}
\setlength{\textheight}{636pt}

\usepackage{amsmath, amsfonts, amsthm, amssymb, multicol, mathtools, dsfont, mathrsfs}

\usepackage{graphicx}
\usepackage{float, hyperref}
\usepackage{scalerel,stackengine, subcaption}
\usepackage[usenames,dvipsnames,x11names]{xcolor}
\usepackage{enumitem}
\usepackage{pgfplots}
\usepgfplotslibrary{fillbetween}
\pgfplotsset{width=10cm,compat=1.9}
\usepackage[square,sort,comma,numbers]{natbib}
\setlength{\bibsep}{0.7pt}

\allowdisplaybreaks

% For maths in bold titles
\makeatletter
\g@addto@macro\bfseries{\boldmath}
\makeatother

%This is for title and authors
\makeatletter
\def\@setauthors{%
  \begingroup
  \def\thanks{\protect\thanks@warning}%
  \trivlist
  \centering\footnotesize \@topsep30\p@\relax
  \advance\@topsep by -\baselineskip
  \item\relax
  \author@andify\authors
  \def\\{\protect\linebreak}

  \normalsize\lowercase{\authors}%
  
	\ifx\@empty\contribs
  \else
    ,\penalty-3 \space \@setcontribs
    \@closetoccontribs
  \fi
  \endtrivlist
  \endgroup
}
\def\@settitle{\begin{center}
\LARGE\lowercase{\@title}
  \end{center}%
}
\makeatother
\newcommand{\authoremail}[1]{\email{\href{mailto:#1}{\color{lightblue}{#1}}}}
\newcommand{\authoraddress}[1]{\address{\normalfont{#1}}}

\hoffset=-1.8cm\voffset=-1cm

\setcounter{secnumdepth}{3}
\numberwithin{equation}{section}

\newtheorem{thm}{Theorem}[section]
\newtheorem{lma}[thm]{Lemma}
\newtheorem{cor}[thm]{Corollary}

\newtheorem{prop}[thm]{Proposition}

\renewcommand{\epsilon}{\varepsilon}

\newcommand{\rd}{\mathbb{R}^d}

\renewcommand{\geq}{\geqslant}
\renewcommand{\leq}{\leqslant}

\newcommand{\fd}{\mathbb{F}^d}
\newcommand{\ff}{\mathbb{F}}

%This is for the widechecks (inverse Fourier transform)
\makeatletter
\DeclareRobustCommand\widecheck[1]{{\mathpalette\@widecheck{#1}}}
\def\@widecheck#1#2{%
    \setbox\z@\hbox{\m@th$#1#2$}%
    \setbox\tw@\hbox{\m@th$#1%
       \widehat{%
          \vrule\@width\z@\@height\ht\z@
          \vrule\@height\z@\@width\wd\z@}$}%
    \dp\tw@-\ht\z@
    \@tempdima\ht\z@ \advance\@tempdima2\ht\tw@ \divide\@tempdima\thr@@
    \setbox\tw@\hbox{%
       \raise\@tempdima\hbox{\scalebox{1}[-1]{\lower\@tempdima\box
\tw@}}}%
    {\ooalign{\box\tw@ \cr \box\z@}}}
\makeatother

% this is for really wide hat
\stackMath
\newcommand\reallywidehat[1]{%
\savestack{\tmpbox}{\stretchto{%
  \scaleto{%
    \scalerel*[\widthof{\ensuremath{#1}}]{\kern.1pt\mathchar"0362\kern.1pt}%
    {\rule{0ex}{\textheight}}
  }{\textheight}% 
}{2.4ex}}%
\stackon[-6.9pt]{#1}{\tmpbox}%
}
\parskip 0.5ex

\definecolor{lightblue}{HTML}{2B77A4}
\colorlet{plotblue}{LightSkyBlue3!80}
\definecolor{darkred}{HTML}{9E0D0D}
\definecolor{purp}{HTML}{d603a9}
\definecolor{dartmouthgreen}{HTML}{00A64F}
\definecolor{Junglegreen}{HTML}{00A99A}
\definecolor{yellowcolour}{HTML}{f07c02}
\hypersetup{
	colorlinks=true,
	linkcolor=darkred,
	urlcolor=darkred,
	citecolor=lightblue
}
\urlstyle{same}

\title{An improved $L^2$ restriction theorem in finite fields}
%\title{Stein--Tomas type theorems with the Fourier spectrum}
% \title{Stein--Tomas type theorems for fractals}

\usepackage{fancyhdr}
\pagestyle{fancy}
\fancyhf{}
\rhead{Fraser, Rakhmonov}
\lhead{Page \thepage}

\author{Jonathan M. Fraser}
\authoraddress{Jonathan M. Fraser, University of St Andrews, Scotland}
\authoremail{jmf32@st-andrews.ac.uk}
\thanks{JMF was  financially supported by a  \emph{Leverhulme Trust Research Project Grant} (RPG-2023-281),  an \emph{EPSRC Standard Grant} (EP/Y029550/1), and an \emph{EPSRC Open Fellowship} (EP/Z533440/1).}

 \author{Firdavs Rakhmonov}
\authoraddress{Firdavs Rakhmonov, University of St Andrews, Scotland}
\authoremail{fr52@st-andrews.ac.uk}
\thanks{FR was  financially supported by a  \emph{Leverhulme Trust Research Project Grant} (RPG-2023-281).}

\date{}

\begin{document}
\thispagestyle{empty}

\begin{abstract}
Mockenhaupt and Tao (Duke 2004) proved a finite field analogue of the Stein--Tomas restriction theorem, establishing a range of $q$ for which $L^q\to L^2$ restriction estimates hold for a given measure $\mu$ on a vector space over a finite field. Their result is expressed in terms of exponents that describe uniform bounds on the measure and its Fourier transform.  We generalise this result by replacing the uniform bounds on the Fourier transform with suitable $L^p$ bounds, and we show that our result improves upon the Mockenhaupt--Tao range in many cases.  We also provide a number of applications of our result, including to Sidon sets and Hamming varieties. \\ \\
  \emph{Mathematics Subject Classification}: primary: 42B10, 42B05; secondary: 42B20, 28A75.
\\
\emph{Key words and phrases}:  restriction problem, Fourier transform, finite fields.
\end{abstract}
\maketitle
\tableofcontents

\section{Introduction}

\subsection{The restriction problem and overview of results} \label{sec:restrictionIntro}

Given a nonzero, finite, compactly supported Borel measure $\mu$ on $\rd$, the celebrated \emph{restriction problem} asks when it is meaningful to restrict the Fourier transform of a function to the support of $\mu$. Particular cases of interest include when $\mu$ is the surface measure on sets such as the sphere, cone, or paraboloid. 

We focus on the $L^2$ theory, where the influential Stein--Tomas restriction theorem provides estimates in terms of the Fourier decay and scaling properties of $\mu$. The most general version, which we now recall, is due to  Bak--Seeger \cite{BS11}, and builds on the work of Stein \cite{stein}, Tomas \cite{Tom75}, Mockenhaupt \cite{Moc00}, Mitsis \cite{Mit02}, and others.  

\begin{thm}[Stein--Tomas]
    Let $\mu$ be a nonzero, finite, compactly supported Borel measure  on $\mathbb{R}^d$, and let $0<\alpha,\beta<d$. Suppose that for all $x\in \mathbb{R}^d$ and all $r>0$, 
    \begin{equation*}
        \mu(B(x,r))\lesssim r^{\alpha},
    \end{equation*}
    and for all $\xi\in \mathbb{R}^d$,
    \begin{equation*}
    |\widehat{\mu}(\xi)|\lesssim |\xi|^{-{\beta}/{2}}.
    \end{equation*}
    Then 
    \begin{equation}
    \label{extension estimate}
        \lVert \widehat{f\mu}\rVert_{L^q(\mathbb{R}^d)}\lesssim_{q,\alpha,\beta}\lVert f\rVert_{L^2(\mu)}
    \end{equation}
    holds for all functions $f\in L^2(\mu)$ and all $q\geq 2+\frac{4(d-\alpha)}{\beta}$.
\end{thm}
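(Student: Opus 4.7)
The plan is to proceed by the standard $TT^{\ast}$ method combined with a Littlewood--Paley decomposition of $\widehat{\mu}$. Viewing the extension operator $Ef := \widehat{f\mu}$ as a bounded map $L^{2}(\mu) \to L^{q}(\mathbb{R}^{d})$, a direct Fubini computation gives $EE^{\ast}g = g \ast \widehat{\mu}$, so by duality \eqref{extension estimate} is equivalent to the convolution bound
\[
    \lVert g \ast \widehat{\mu} \rVert_{L^{q}(\mathbb{R}^{d})} \lesssim \lVert g\rVert_{L^{q'}(\mathbb{R}^{d})},
\]
and I would establish this form instead.

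Next, I would fix a smooth dyadic partition of unity $1 = \phi(\xi) + \sum_{j \geq 1}\psi(\xi/2^{j})$ with $\phi$ supported near the origin and $\psi$ supported in $\{1/2 \leq |\xi| \leq 2\}$, and decompose $\widehat{\mu} = K_{0} + \sum_{j \geq 1} K_{j}$ with $K_{0}(\xi) = \widehat{\mu}(\xi)\phi(\xi)$ and $K_{j}(\xi) = \widehat{\mu}(\xi)\psi(\xi/2^{j})$. Writing $T_{j}g = g \ast K_{j}$, the low-frequency operator $T_{0}$ is immediate since $K_{0}$ is essentially Schwartz. For $j \geq 1$, the Fourier decay hypothesis yields $\lVert K_{j}\rVert_{\infty}\lesssim 2^{-j\beta/2}$, hence by Young's inequality
\[
    \lVert T_{j}\rVert_{L^{1}\to L^{\infty}} \lesssim 2^{-j\beta/2}.
\]
For the $L^{2}\to L^{2}$ bound, Plancherel reduces the operator norm to $\lVert\check{K}_{j}\rVert_{\infty}$; since $\check{K}_{j} = \mu \ast \check{\psi}_{j}$ with $\check{\psi}_{j}$ an $L^{1}$-normalised Schwartz bump at scale $2^{-j}$, a layer-cake argument using the ball condition $\mu(B(x,r))\lesssim r^{\alpha}$ on the dyadic annuli of $\check{\psi}_{j}$ gives $\lVert\check{K}_{j}\rVert_{\infty}\lesssim 2^{j(d-\alpha)}$, and therefore
\[
    \lVert T_{j}\rVert_{L^{2}\to L^{2}} \lesssim 2^{j(d-\alpha)}.
\]

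Applying Riesz--Thorin interpolation between these two endpoints produces $\lVert T_{j}\rVert_{L^{q'}\to L^{q}} \lesssim 2^{j\gamma(q)}$ with $\gamma(q) = -\beta/2 + (\beta + 2(d-\alpha))/q$, and $\gamma(q) < 0$ rearranges precisely to $q > 2 + 4(d-\alpha)/\beta$. For any such $q$, the geometric series $\sum_{j}2^{j\gamma(q)}$ converges and the triangle inequality yields the desired convolution estimate.

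The main obstacle is the endpoint $q_{\ast} = 2 + 4(d-\alpha)/\beta$, at which $\gamma(q_{\ast}) = 0$ and naive summation fails with a logarithmic divergence. This is exactly where the Bak--Seeger refinement over the earlier work of Stein, Tomas, Mockenhaupt, and Mitsis enters: I would embed $\{T_{j}\}$ into an analytic family of operators indexed by a complex parameter $z$ in a vertical strip, reweighting the dyadic pieces by $2^{jz}$, establish uniform $L^{1}\to L^{\infty}$ and $L^{2}\to L^{2}$ bounds on the two boundary lines, and invoke Stein's analytic interpolation theorem to recover the endpoint. A secondary delicate point is the sharp bound $\lVert\check{K}_{j}\rVert_{\infty} \lesssim 2^{j(d-\alpha)}$: this is the only step where $\alpha$ enters the final exponent, and it requires carefully splitting the tail of $\check{\psi}_{j}$ into dyadic annuli and applying the ball condition on each.
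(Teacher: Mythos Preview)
The paper does not prove this statement. The Stein--Tomas theorem is stated in the introduction purely as background and motivation, with attribution to Bak--Seeger \cite{BS11} and the earlier work of Stein, Tomas, Mockenhaupt, and Mitsis; no proof is given or sketched anywhere in the paper. So there is no ``paper's own proof'' to compare against.

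That said, your sketch is the standard argument from the cited literature and is essentially correct: $TT^{\ast}$ reduction to a convolution inequality, dyadic decomposition of $\widehat{\mu}$, the $L^{1}\to L^{\infty}$ bound from Fourier decay, the $L^{2}\to L^{2}$ bound from the ball condition via $\check{K}_{j}=\mu\ast\check{\psi}_{j}$, Riesz--Thorin for the open range, and analytic interpolation \`a la Bak--Seeger for the endpoint. It is worth noting that the paper's actual contribution, Theorem~\ref{main} in the finite field setting, is proved by a structurally analogous $TT^{\ast}$ argument (Section~\ref{proofmain}): reduce to a convolution bound for $f\ast\mu^{\lor}$, split off a Dirac piece, and interpolate between an $L^{(2p)'}\to L^{2p}$ endpoint (from Young's inequality and the $L^{p}$ hypothesis on $\widehat{\mu}$) and an $L^{2}\to L^{2}$ endpoint (from Plancherel and the mass bound). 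In the finite field case there is no dyadic decomposition and no endpoint subtlety, so the argument is considerably shorter, but the skeleton is the same as what you outlined.
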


The estimate \eqref{extension estimate} is the $L^2 \to L^q$ extension estimate, which, by duality, is equivalent to the $L^{q'} \to L^2$ restriction estimate 
\begin{equation*}
\| \widehat f \|_{L^2(\mu)} \lesssim \|f \|_{L^{q'}(\mathbb{R}^d)},
\end{equation*}
which says  that we may meaningfully interpret the restriction of the Fourier transform of an $L^{q'}$ function in the $L^2$ sense.  

Mockenhaupt and Tao \cite{moctao} proved a finite field analogue of the Stein--Tomas restriction theorem; see Theorem \ref{moctaoresult} below. Analogous to the Stein--Tomas result, their theorem provides a range based on \emph{uniform} bounds for the Fourier transform of the measure. Our main result, Theorem \ref{main}, generalises the Mockenhaupt--Tao result by allowing the range to depend on suitable $L^p$ averages of the Fourier transform.

We  show that our result improves upon the Mockenhaupt--Tao range in many cases and provide several applications of our estimate, including to restriction theory for Sidon sets (Corollary \ref{sidonrestriction}) and Hamming varieties  (Corollary \ref{hammingresult}).

In the above and throughout, the notation $A\lesssim B$ signifies that $A\leq c B$ for some constant $c>0$ depending only on the ambient spatial dimension $d$.  Similarly, we write $A \gtrsim B$ to mean $B \lesssim A$, and $A \approx B$ if both $A\lesssim B$ and $A \gtrsim B$ hold. We will use subscripts to indicate that the implicit constants depend on other parameters, such as $q$, $\alpha$, and $\beta$ in \eqref{extension estimate}. The implicit constants will never depend on the size of the base field $\mathbb{F}$.   We also  write $A \gg B$ to denote the negation of $A \lesssim B$.

We also write $q'$ for the H\"older conjugate of $q \in [1,\infty]$, i.e., the unique $q'\in [1,\infty]$ satisfying $\frac{1}{q}+\frac{1}{q'}=1$. Additionally, we use $|X|$ to denote the cardinality of a finite set $X$.

\subsection{Discrete Fourier analysis and the restriction problem over finite fields}

In this  section, we first provide an overview of discrete Fourier analysis in vector spaces over finite fields and refer the reader to \cite{nied} for a more general background on finite fields. 

Let $\ff$ be a finite field, and let $\ff^d$ be the $d$-dimensional vector space over $\ff$, where $d \geq 1$ is an integer. The \emph{Fourier transform} and \emph{inverse Fourier transform} of a function $f : \ff^d \to \mathbb{C}$ are the functions $\widehat f :  \ff^d \to \mathbb{C}$ and $f^{\lor}:\ff^d\to \mathbb{C}$, defined by
\begin{equation*}
\label{def of FT}
\widehat f(\xi) \coloneqq   \sum_{x \in \ff^d} f(x)\chi(-\xi  \cdot x),
\end{equation*}
\begin{equation*}
f^{\lor} (\xi) \coloneqq \sum_{x \in \ff^d} f(x)\chi(\xi  \cdot x),
\end{equation*}
where $\chi : \ff \to S^1 \subseteq \mathbb{C}$ is a nontrivial additive character. The specific choice of $\chi$ does not play an important role in what follows. Here, $\xi \cdot x$ denotes the standard dot product on $\ff^d$, which takes values in $\ff$.

The following identity shows that one can interchange the Fourier and inverse Fourier transforms: for every $x\in \ff^d$, we have
\begin{equation} 
\label{inversion}
(\widehat{f})^{\lor}(x) = \widehat{{f^{\lor}}}(x)   = |\ff|^d f(x).
\end{equation}
The following result, known as \emph{Parseval's theorem}, will be very useful throughout the paper:
\begin{equation}
\label{parseval}
 \sum_{\xi \in \ff^d} \widehat f(\xi) \overline{\widehat g(\xi)}  = |\ff|^d \sum_{x \in \ff^d}  f(x) \, \overline{g(x)}.
\end{equation}
The special case of \eqref{parseval} with $f=g$ is called \emph{Plancherel's formula}:
\begin{equation}
\label{plancherel}
 \sum_{\xi \in \ff^d} |\widehat f(\xi)|^2  = |\ff|^d \sum_{x \in \ff^d}  |f(x)|^2.
\end{equation}
For $E\subseteq \ff^d$, we define $E(x)$ to be the indicator function of $E$, that is, 
\begin{equation*}
E(x) \coloneqq
\begin{cases}
0, & \text{if }x\notin E, \\
1, & \text{if }x\in E.
\end{cases} 
\end{equation*}
For functions $f,g:\ff^d\to \mathbb{C}$, we define the \emph{convolution} as follows:
\begin{equation*}
    f * g (x) \coloneqq \sum_{y \in \ff^d} f(y)g(x-y),
\end{equation*}
and it is not difficult to prove the \emph{convolution law}: for every $\xi \in \ff^d$, we have:
\begin{equation} 
\label{convolution law}
\widehat {f* g}(\xi) = \widehat f(\xi) \widehat g(\xi).
\end{equation}
We are interested in the restriction problem in $\fd$.  A \emph{probability measure} $\mu$ on $\fd$ is simply a non-negative function that sums to 1.  For $E \subseteq \fd$,   the \emph{surface measure} on $E$ is the uniform probability measure on $E$, namely, 
\[
\mu(x) = \frac{E(x)}{|E|}.
\]
Therefore, by the definition of the Fourier transform, we have
\[
\widehat \mu(\xi) =  \frac{1}{|E|} \sum_{x \in E} \chi(-\xi \cdot x), 
\]
where $\mu$ is the surface measure on $E$. In the continuous setting, one often studies the decay of the Fourier transform of a function or measure at infinity. However, in the finite field setting, there is no notion of \emph{infinity}; instead, one seeks uniform estimates over $\ff^d$. When seeking uniform bounds for the Fourier transform of a probability measure  $\mu$ on $ \fd$, one always has the trivial estimate
\begin{equation} \label{trivial}
|\widehat \mu (\xi) | \leq    1 
\end{equation}
for all $\xi \in \ff^d$, and this bound is always attained since $\widehat \mu (0)=1$ .  On the other hand, by Plancherel's Theorem \eqref{plancherel}, if $\mu$ is the surface measure on a set $E$, then 
\begin{equation*}
 |\ff|^{d} |E|^{-1} =   |\ff|^d \sum_{x \in \ff^d} | \mu(x)|^2 = \sum_{\xi \in \ff^d} |\widehat \mu(\xi)|^2 \leq 1+ |\ff|^d \sup_{\xi \neq 0} |\widehat \mu(\xi)|^2,
\end{equation*}
and so the best one can reasonably hope for is 
 \begin{equation} \label{iosevichassumption}
|\widehat \mu(\xi) | \lesssim  |E|^{-\frac{1}{2}}
\end{equation}
for all $\xi \in \ff^d\setminus \{0\}$. This estimate certainly cannot be improved upon if $|E| = o(|\ff|^d)$.   In \cite{iosevich}, sets satisfying \eqref{iosevichassumption} were called \emph{Salem sets}, i.e., sets for which the surface measure has optimal uniform bounds on its Fourier transform.

Mockenhaupt and Tao \cite{moctao} established the following  finite field analogue of the Stein--Tomas restriction theorem, also see \cite{chen}. We state only the extension estimate here, as the corresponding restriction estimate follows by duality. For a function $g: \fd \to \mathbb{C}$, we define
\begin{equation*}
\|g\|_{L^q(\fd)}  \coloneqq \Bigg(\sum_{x \in \fd} |g(x)|^q \Bigg)^{\frac{1}{q}}
\quad \textup{and} \quad \|g\|_{L^q(\mu)}  \coloneqq \Bigg(\sum_{x \in \fd} |g(x)|^q \mu(x) \Bigg)^{\frac{1}{q}}.
\end{equation*}

\begin{thm}[Mockenhaupt--Tao] \label{moctaoresult}
Let $\mu$ be a probability measure on $\fd$, and let $0<\alpha,\beta_{\infty}<d$. Suppose that $\mu(x) \lesssim  |\ff|^{-\alpha}$
for all $x \in \fd$ and $|\widehat \mu (\xi) |\lesssim |\ff|^{-{\beta_{\infty}}/{2}}$ for all $\xi \in \fd \setminus \{0\}$.  Then
\[
    \|\widehat{f\mu}\|_{L^q(\fd)}\lesssim_{q,\alpha,\beta_{\infty}} \|f\|_{L^2(\mu)}
\]
holds for all functions $f:\fd\to \mathbb{C}$ and all $q \geq 2+\frac{4(d-\alpha)}{\beta_\infty}$.
\end{thm}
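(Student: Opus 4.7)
The plan is to deploy the standard $TT^*$ argument, adapted to the finite field setting. Set $Tf = \widehat{f\mu}$ regarded as a map $L^2(\mu)\to L^q(\fd)$; by duality it suffices to show $\|T^*g\|_{L^2(\mu)}\lesssim \|g\|_{L^{q'}(\fd)}$. A short computation using the definition of the Fourier transform identifies the adjoint as $T^*g = g^\lor$, and another direct computation gives $TT^*g = g * \widehat\mu$. Since $\|T^*g\|_{L^2(\mu)}^2 = \langle TT^*g, g\rangle_{L^2(\fd)}$, the task reduces to proving the quadratic form bound $\langle g * \widehat\mu, g\rangle \lesssim \|g\|_{L^{q'}}^2$.

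To exploit the two hypotheses separately, I would decompose $\widehat\mu = \mathbb{1}_{\{0\}} + K$, where $K(\xi)=\widehat\mu(\xi)$ on $\fd\setminus\{0\}$ and $K(0)=0$; this is well-defined because $\widehat\mu(0)=1$. The identity part contributes $\langle g,g\rangle = \|g\|_{L^2(\fd)}^2$, which is bounded above by $\|g\|_{L^{q'}}^2$ using the counting-measure inclusion $\ell^{q'}\subseteq \ell^2$ valid for $q'\leq 2$. For the remaining term $\langle g * K, g\rangle$, H\"older's inequality reduces the problem to the convolution estimate $\|g * K\|_{L^q}\lesssim \|g\|_{L^{q'}}$.

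For this convolution estimate, the plan is to interpolate between two endpoint bounds. The $L^1\to L^\infty$ bound follows directly from Young's inequality and the Fourier hypothesis: $\|g * K\|_{L^\infty}\leq \|K\|_{L^\infty}\|g\|_{L^1}\lesssim |\ff|^{-\beta_\infty/2}\|g\|_{L^1}$. For the $L^2\to L^2$ bound, Plancherel \eqref{plancherel} reduces matters to estimating $\|\widehat K\|_{L^\infty}$; by \eqref{inversion} one has $\widehat K(\xi)=|\ff|^d\mu(-\xi)-1$, so the pointwise measure bound yields $\|\widehat K\|_{L^\infty}\lesssim |\ff|^{d-\alpha}$. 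Applying Riesz--Thorin interpolation with parameter $\theta=2/q$ gives
\[
\|g * K\|_{L^q}\lesssim |\ff|^{-\beta_\infty/2 + (\beta_\infty+2(d-\alpha))/q}\,\|g\|_{L^{q'}},
\]
and the exponent is non-positive precisely when $q\geq 2+4(d-\alpha)/\beta_\infty$, recovering the Mockenhaupt--Tao threshold exactly.

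The main technical point will be identifying $\widehat K$ correctly and extracting the sharp $L^\infty$ bound from the pointwise measure bound on $\mu$; once both endpoint estimates are in place the critical exponent emerges by optimizing the Riesz--Thorin interpolation, and assembling the two contributions above yields the desired inequality $\|T^*g\|_{L^2(\mu)}^2\lesssim \|g\|_{L^{q'}}^2$ throughout the claimed range.
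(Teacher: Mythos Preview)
Your proposal is correct and follows essentially the same route as the paper. The paper proves the more general Theorem~\ref{main} and recovers Mockenhaupt--Tao as the $p=\infty$ specialisation; that specialised argument is exactly your $TT^*$ scheme, with the only cosmetic differences being that the paper computes $\|\widehat f\|_{L^2(\mu)}^2=\langle f, f*\mu^{\lor}\rangle$ directly (your $\langle g*\widehat\mu,g\rangle$ is the reflected version, since $\widehat\mu(\xi)=\mu^{\lor}(-\xi)$) and writes the decomposition as $\mu^{\lor}=\delta_0+(\mu^{\lor}-\delta_0)$ rather than $\widehat\mu=\mathbb{1}_{\{0\}}+K$; the two endpoint bounds, the Riesz--Thorin interpolation, and the resulting threshold $q\geq 2+4(d-\alpha)/\beta_\infty$ all match.
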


\section{Main results}

In \cite{fraserfinite}, a new approach for capturing the Fourier decay in the setting of finite fields was introduced.  Namely, for a given $p \in [1,\infty]$ and $s\in [0,1]$, we say that a set $E \subseteq \fd$ is a \emph{$(p,s)$-Salem set} if the surface measure $\mu$ on $E$ satisfies
\[
\| \widehat \mu \| _p \lesssim_{p,s} |E|^{-s},
\]
where
\begin{equation*}
\| \widehat \mu \| _p \coloneqq \Bigg( |\ff|^{-d} \sum_{\xi \neq 0} | \widehat \mu (\xi)|^p \Bigg)^{\frac{1}{p}}
\end{equation*}
for $p\in [1,\infty)$, and
\[
\| \widehat \mu \| _\infty \coloneqq  \sup_{\xi \neq 0} |\widehat \mu(\xi)|.
\]
In \cite{fraserfinite}, the notation was slightly different (for example, the Fourier transform was normalized), but we adopt the  notation above to facilitate comparison with the literature, particularly \cite{moctao}. 

It is straightforward to see that  $E$ is a Salem set if and only if it is an $(\infty, \frac{1}{2})$-Salem set. Moreover, every set is a $(p,\frac{1}{p})$-Salem set for all $p \in [2,\infty]$.  Furthermore, if $|E| = o(|\mathbb{F}|^d)$, then the best one can hope for is that $E$ is a $(p, \tfrac{1}{2})$-Salem set for any given $p$.

The idea is that by considering a continuum of $L^p$ averages of the Fourier transform, one may obtain more nuanced information about the Fourier analytic behaviour of $\mu$, which in turn can lead to stronger applications.  Indeed, this is the case for Fourier restriction, and we can prove the following generalisation of the Mockenhaupt--Tao theorem. The following result serves as a finite field analogue of a restriction theorem of Carnovale--Fraser--de Orellana \cite{ana} and is the main result of the paper.

\begin{thm} 
\label{main}
Let $p\in [1,\infty]$, and let $\mu$ be a probability measure on $\fd$, with $0<\alpha,\beta_p<d$. If $\mu(x) \lesssim  |\ff|^{-\alpha}$ for all $x \in \fd$ and $\|\widehat \mu \|_p \lesssim |\ff|^{-\beta_p/2}$ for some $\beta_p \geq \frac{2d}{p}$, then
\begin{equation}
\label{norm inequality}
    \|\widehat{f\mu}\|_{L^q(\fd)}\lesssim \|f\|_{L^2(\mu)}
\end{equation}
holds for all functions $f:\fd\to \mathbb{C}$ and all 
\[
q \geq  2+\frac{(4p-4)(d-\alpha)}{p\beta_p-2\alpha} .
\]
\end{thm}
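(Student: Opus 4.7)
The plan is to adapt the $TT^*$-plus-interpolation strategy underlying the Mockenhaupt--Tao theorem by replacing the $L^\infty$ bound on $\widehat\mu$ with the hypothesised $L^p$ bound, which in turn forces a non-standard endpoint for Young's inequality.

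First I would reduce \eqref{norm inequality} to the convolution estimate $\|g * \widehat\mu\|_{L^q(\fd)} \lesssim \|g\|_{L^{q'}(\fd)}$ via the $TT^*$ identity $\|T\|^2 = \|TT^*\|$ applied to $Tf = \widehat{f\mu}$. A direct computation using \eqref{inversion} and the convolution law \eqref{convolution law} shows that $TT^*g = g * \widehat\mu$. I would then split $\widehat\mu(\xi) = \mathbf{1}_{\xi = 0}(\xi) + K(\xi)$ with $K = \widehat\mu \cdot \mathbf{1}_{\xi \neq 0}$, exploiting $\widehat\mu(0) = 1$. Convolution with $\mathbf{1}_{\xi = 0}$ is the identity operator, which maps $L^{q'}(\fd)$ into $L^q(\fd)$ with $O(1)$ constant for all $q \geq 2$ by monotonicity of $\ell^p$-norms on the finite set $\fd$.

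The substantive work is to bound convolution with $K$, for which I would establish two endpoint bounds, both lying on the duality line $\tfrac{1}{a} + \tfrac{1}{b} = 1$. Young's inequality yields
\[
\|g * K\|_{L^{2p}(\fd)} \leq \|K\|_{L^p(\fd)} \, \|g\|_{L^{(2p)'}(\fd)},
\]
where the definition of $\|\widehat\mu\|_p$ immediately gives $\|K\|_{L^p(\fd)} = |\ff|^{d/p} \|\widehat\mu\|_p \lesssim |\ff|^{d/p - \beta_p/2}$. Plancherel's formula \eqref{plancherel} gives the second endpoint,
\[
\|g * K\|_{L^2(\fd)} \leq \|\widehat K\|_{L^\infty(\fd)} \, \|g\|_{L^2(\fd)},
\]
and \eqref{inversion} yields $\widehat K(x) = |\ff|^d \mu(-x) - 1$, so the pointwise bound on $\mu$ produces $\|\widehat K\|_{L^\infty(\fd)} \lesssim |\ff|^{d - \alpha}$. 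Riesz--Thorin interpolation between $L^{(2p)'} \to L^{2p}$ and $L^2 \to L^2$ then gives an $L^{q'} \to L^q$ bound for every $q \in [2, 2p]$, with interpolated constant $|\ff|^{E(q)}$ for an explicit linear exponent $E$; solving $E(q) \leq 0$ reproduces exactly the stated threshold $q \geq 2 + \tfrac{(4p-4)(d-\alpha)}{p\beta_p - 2\alpha}$. For $q > 2p$ the estimate extends automatically by monotonicity of $\ell^p$-norms on $\fd$.

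The main subtlety is endpoint selection. Because we only have $L^p$ (rather than $L^\infty$) control of $\widehat\mu$, the Mockenhaupt--Tao endpoint $L^1 \to L^\infty$ is unavailable, and the requirement that both endpoints lie on the duality line forces the non-standard choice $L^{(2p)'} \to L^{2p}$. The hypothesis $\beta_p \geq \tfrac{2d}{p}$ is exactly what ensures the derived threshold $q_0 = 2 + \tfrac{(4p-4)(d-\alpha)}{p\beta_p - 2\alpha}$ satisfies $q_0 \leq 2p$, placing it in the valid interpolation range and allowing the argument to close.
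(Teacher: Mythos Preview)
Your proposal is correct and follows essentially the same approach as the paper: both reduce to the convolution estimate via $TT^*$/duality, split off the Dirac/identity piece, obtain the $L^{(2p)'}\to L^{2p}$ endpoint from Young's inequality using the $L^p$ bound on $\widehat\mu$, obtain the $L^2\to L^2$ endpoint from Plancherel using the pointwise bound on $\mu$, and conclude by Riesz--Thorin interpolation. The only cosmetic difference is that the paper phrases the reduction on the restriction side (convolving with $\mu^{\lor}$ and splitting off $\delta_0$) whereas you phrase it on the extension side (convolving with $\widehat\mu$ and splitting off $\mathbf{1}_{\xi=0}$), which are equivalent up to a reflection.
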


We prove Theorem \ref{main} in Section \ref{proofmain}.  First, we observe that by setting $p=\infty$, we recover the Mockenhaupt--Tao result, since the seemingly additional condition $\beta_p \geq \frac{2d}{p}$ becomes $\beta_\infty \geq 0$, which is trivially satisfied.  Moreover, we obtain a strictly better range whenever there exists $p \in [2,\infty)$ such that the assumptions hold with
\[
\beta_p > \beta_\infty+\frac{2\alpha-\beta_\infty}{p} \qquad \text{and} \qquad \beta_p \geq \frac{2d}{p},
\]
where $\beta_\infty$ is the decay exponent from the Mockenhaupt--Tao setting.

For ease of comparison with \cite{fraserfinite} and the examples therein, we state a corollary using the terminology from \cite{fraserfinite}. The only additional  observation needed is that if $|E| \approx |\ff|^\alpha$ and $E$  is a $(p,s)$-Salem set, then
\[
\mu(x) \lesssim  |\ff|^{-\alpha}
\]
for all $x \in \fd$, and 
\[
\|\widehat \mu \|_p \lesssim |\ff|^{-\alpha s},
\]
where $\mu$ denotes the surface measure on $E$.

\begin{cor}  
\label{cormain}
Let $0<\alpha<d$, and let $E \subseteq \fd$ be such that $|E| \approx |\ff|^\alpha$. Suppose that $E$ is a $(p,s)$-Salem set with  $s \geq \frac{d}{p\alpha}$.  Then, for $\mu$ the surface measure on $E$,
\[
    \|\widehat{f\mu}\|_{L^q(\fd)}\lesssim \|f\|_{L^2(\mu)}
\]
holds for all functions $f:\fd\to \mathbb{C}$, provided that 
\[
q \geq  2+\frac{(2p-2)(d-\alpha)}{ \alpha p s-\alpha} .
\]
In particular, this improves upon the Mockenhaupt--Tao range when
\[
s>s_\infty+\frac{1-s_\infty}{p},
\]
where $s_\infty$ is chosen optimally so that $E$ is $(\infty, s_\infty)$-Salem set.
\end{cor}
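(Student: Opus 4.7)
The plan is to deduce Corollary \ref{cormain} as a direct consequence of Theorem \ref{main}, by translating the $(p,s)$-Salem hypothesis into the form required by that theorem. The translation is essentially already indicated in the paragraph preceding the statement.

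First, I would verify the two hypotheses of Theorem \ref{main}. Since $\mu(x)=E(x)/|E|$ and $|E|\approx|\ff|^\alpha$, we immediately get $\mu(x)\lesssim|\ff|^{-\alpha}$. Next, the $(p,s)$-Salem assumption gives $\|\widehat\mu\|_p\lesssim|E|^{-s}\approx|\ff|^{-\alpha s}$, so the exponent $\beta_p$ of Theorem \ref{main} can be taken as $\beta_p=2\alpha s$. Under this identification, the side condition $\beta_p\geq 2d/p$ becomes precisely $s\geq d/(p\alpha)$, which matches the hypothesis of the corollary.

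With these two ingredients verified, Theorem \ref{main} applies and yields the extension estimate for every
\[
q\;\geq\; 2+\frac{(4p-4)(d-\alpha)}{p\beta_p-2\alpha}\;=\;2+\frac{(4p-4)(d-\alpha)}{2\alpha(ps-1)}\;=\;2+\frac{(2p-2)(d-\alpha)}{\alpha ps-\alpha},
\]
which is exactly the range stated in the corollary, proving the main inequality.

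For the improvement claim, I would compare this threshold with the Mockenhaupt--Tao threshold obtained by specialising to an $(\infty,s_\infty)$-Salem set, where $\beta_\infty=2\alpha s_\infty$, giving the Mockenhaupt--Tao threshold $q\geq 2+\frac{2(d-\alpha)}{\alpha s_\infty}$. A direct algebraic manipulation shows that our threshold is strictly smaller precisely when $(p-1)s_\infty<ps-1$, equivalently $s>s_\infty+(1-s_\infty)/p$, as claimed. There is no substantive obstacle in any of this; the corollary is essentially a bookkeeping exercise, and the only care required is to confirm that the side condition and the simplification of the range are handled correctly.
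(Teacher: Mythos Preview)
Your proposal is correct and follows essentially the same approach as the paper: the paper states explicitly in the paragraph preceding the corollary that the only observations needed are $\mu(x)\lesssim|\ff|^{-\alpha}$ and $\|\widehat\mu\|_p\lesssim|\ff|^{-\alpha s}$, after which one applies Theorem~\ref{main} with $\beta_p=2\alpha s$. Your verification of the side condition and the algebra for the improvement over the Mockenhaupt--Tao range are both correct.
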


Note that from \cite[Proposition 2.1]{fraserfinite}, we know that \emph{all} sets $E$ are $(p,s)$-Salem set for 
\[
s = s_\infty+\frac{1-2s_\infty}{p},
\]
and so the threshold for improving upon the Mockenhaupt--Tao range is quite reasonable. We will see later that this improvement occurs in several examples. 
 
In the other direction, we note the following negative result, framed in terms of our  $L^p$ approach.  

\begin{prop} 
\label{converse}
Let $\mu$ be a probability measure on $\fd$   such that $\|\widehat \mu \|_p \gg |\ff|^{-\beta_p/2}$ for some $\beta_p \leq \frac{2d}{p}$. Then,
\[
    \|\widehat{f\mu}\|_{L^p(\fd)} \gg 1
\]
for  $f=E(\cdot)-\delta_0$, where $\delta_0$ denotes the Dirac delta measure at $0\in \mathbb{F}_q^d$. In particular, there are no $L^{q_0} \to L^q$ extension estimates for any $q_0$ with $q \leq p$. 
\end{prop}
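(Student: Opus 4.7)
The plan is to test the purported extension estimate directly with the prescribed function. With $E$ interpreted as the ambient space $\mathbb{F}^d$ (equivalently, the support of $\mu$), we have $(f\mu)(x)=\mu(x)$ for $x\neq 0$ and $(f\mu)(0)=0$, so a direct computation from the definition of the Fourier transform yields
\[
\widehat{f\mu}(\xi)=\widehat\mu(\xi)-\mu(0),
\]
while $\|f\|_{L^{q_0}(\mu)}=(1-\mu(0))^{1/q_0}\leq 1$ for every $q_0\in[1,\infty]$.

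The heart of the argument is the lower bound $\|\widehat{f\mu}\|_{L^p(\mathbb{F}^d)}\gg 1$. The hypothesis $\|\widehat\mu\|_p\gg|\mathbb{F}|^{-\beta_p/2}$ together with $\beta_p\leq 2d/p$ immediately gives
\[
\sum_{\xi\neq 0}|\widehat\mu(\xi)|^p=|\mathbb{F}|^d\|\widehat\mu\|_p^p\gg|\mathbb{F}|^{d-p\beta_p/2}\geq 1,
\]
hence $\|\widehat\mu\|_{L^p(\mathbb{F}^d)}\gg 1$. For the stated $f$, Minkowski's inequality yields $\|\widehat{f\mu}\|_{L^p(\mathbb{F}^d)}\geq\|\widehat\mu\|_{L^p(\mathbb{F}^d)}-\mu(0)|\mathbb{F}|^{d/p}$, and in the regime where $\mu(0)$ is too large for this shift term to be absorbed, I would split frequencies at the threshold $|\widehat\mu(\xi)|\geq 2\mu(0)$: on the high-frequency set one has the pointwise bound $|\widehat\mu(\xi)-\mu(0)|\geq|\widehat\mu(\xi)|/2$, while the low-frequency set contributes at most $2^p\mu(0)^p|\mathbb{F}|^d$ to $\sum_{\xi\neq 0}|\widehat\mu(\xi)|^p$ and can be discarded. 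Managing this $\mu(0)$-shift in the near-Dirac regime is the main analytic obstacle; it disappears entirely for the un-shifted test function $f=E(\cdot)$, for which $\widehat{f\mu}=\widehat\mu$ and the bound $\|\widehat{f\mu}\|_{L^p(\mathbb{F}^d)}\gg 1$ is immediate from the display above.

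Finally, the in-particular conclusion follows from the monotonicity of discrete $\ell^p$-norms under counting measure: since $q\leq p$, $\|g\|_{L^q(\mathbb{F}^d)}\geq\|g\|_{L^p(\mathbb{F}^d)}$ for every $g$, so the lower bound propagates to $\|\widehat{f\mu}\|_{L^q(\mathbb{F}^d)}\gg 1$ for every $q\leq p$, while $\|f\|_{L^{q_0}(\mu)}\leq 1$ uniformly in $q_0$. Any putative $L^{q_0}\to L^q$ extension estimate $\|\widehat{f\mu}\|_{L^q(\mathbb{F}^d)}\lesssim\|f\|_{L^{q_0}(\mu)}$ is thereby contradicted.
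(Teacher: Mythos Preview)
Your approach is the paper's: substitute the test function and compute. The paper's one-line argument simply asserts $\|\widehat{f\mu}\|_{L^p(\fd)} = |\ff|^{d/p}\|\widehat\mu\|_p$ and proceeds; you are more careful and correctly notice that the $\mu(0)$-shift obstructs this identity. Your threshold-splitting patch, however, does not close the gap in general. Take $\mu=\tfrac{1}{2}\delta_0+\tfrac{1}{2}\cdot(\text{uniform on }\fd)$: then $\widehat\mu(\xi)=\tfrac{1}{2}$ for every $\xi\neq 0$ and $\mu(0)=\tfrac{1}{2}+\tfrac{1}{2}|\ff|^{-d}$, so the hypothesis holds with any $0<\beta_p\leq 2d/p$, yet $\widehat{f\mu}(\xi)=\widehat\mu(\xi)-\mu(0)=-\tfrac{1}{2}|\ff|^{-d}$ for $\xi\neq 0$ and one computes $\|\widehat{f\mu}\|_{L^p(\fd)}\approx\tfrac{1}{2}\not\gg 1$. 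In this example no nonzero frequency satisfies $|\widehat\mu(\xi)|\geq 2\mu(0)$, so your ``high'' set is empty and the split yields nothing. The displayed claim for the specific $f=E(\cdot)-\delta_0$ is therefore false as stated for such near-Dirac $\mu$; this is a genuine imprecision in the proposition itself, not just in your argument.

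Your fallback to $f=E(\cdot)$ is exactly the right fix and delivers the ``in particular'' conclusion cleanly: then $\widehat{f\mu}=\widehat\mu$, so
\[
\|\widehat{f\mu}\|_{L^p(\fd)}\geq\Big(\sum_{\xi\neq 0}|\widehat\mu(\xi)|^p\Big)^{1/p}=|\ff|^{d/p}\|\widehat\mu\|_p\gg 1
\]
while $\|f\|_{L^{q_0}(\mu)}=1$, and the monotonicity step to $q\leq p$ is as you wrote. In the paper's intended applications $\mu$ is a surface measure with $\|\mu\|_\infty\lesssim|\ff|^{-\alpha}$, so $\mu(0)$ is negligible and the $-\delta_0$ is immaterial; the subtraction appears to be there only to mirror the decomposition $\mu^\lor-\delta_0$ used in the proof of the main theorem.
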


\begin{proof}[Proof of Proposition \ref{converse}]
Setting $f=E(\cdot)-\delta_0$, we immediately obtain
\[
   \|\widehat{f\mu}\|_{L^p(\fd)} = |\ff|^{\frac{d}{p}} \| \widehat \mu \|_p \gg |\ff|^{\frac{d}{p}}|\ff|^{-\frac{\beta_p}{2}} \geq 1,
\]
as required.
\end{proof}

Proposition \ref{converse} generalises the observation by Mockenhaupt--Tao that there are no $L^{q_0} \to L^q$ extension estimates for any $q_0$ with $q < \frac{2d}{\alpha}$ when $\mu$ is the surface measure on $E$ with $E\approx q^\alpha$ and  $\alpha<d$ \cite[(9)]{moctao}.  In particular, we get
\[
q< \sup\left\{p : \text{$E$ fails to be $\Big(p, \frac{d}{\alpha p}\Big)$-Salem}\right\}.
\]
Since no such $E$ can be $(p,\frac{1}{2}+\varepsilon)$-Salem we recover the range $q <\frac{2d}{\alpha}$, but we will often get a better bound.

 \section{Proof of Theorem \ref{main}} \label{proofmain}

Our proof follows the general strategy of the Mockenhaupt--Tao argument \cite{moctao}, but we replace their $L^1 \to L^\infty$ estimate with an $L^{(2p)'} \to L^{2p}$ estimate, which we obtain via $\|\widehat \mu \|_p$. We  found the exposition in \cite{chen} very helpful for following the argument. 

Instead of proving \eqref{norm inequality}, we will prove the restriction version, namely,
\[
\| \widehat{f} \|_{L^2(\mu)} \lesssim \|f \|_{L^{q'}(\ff^d)}.
\]  
Let $q \geq 2$ be arbitrary for now.  Then, we obtain:
\begin{align*}
\| \widehat{f} \|_{L^2(\mu)}^2 &=  \sum_{\xi \in \fd} |\widehat{f}(\xi)|^2 \mu(\xi) \\ 
& = \sum_{\xi \in \fd} \widehat{f}(\xi) \overline{\widehat{f}(\xi)\mu(\xi)} \\
&=|\ff|^{-d}\sum_{\xi \in \fd} \widehat{f}(\xi)\overline{\widehat{f}(\xi) \widehat{\mu^{\lor}}(\xi)} \qquad \text{(by inversion \eqref{inversion})}\\
&=|\ff|^{-d}\sum_{\xi \in \fd} \widehat{f}(\xi)\overline{\widehat{f*\mu^{\lor}}(\xi)} \qquad \text{(by the convolution law  \eqref{convolution law})}\\
&=\sum_{x \in \fd} f(x)\overline{f*\mu^{\lor}(x)}\qquad \text{(by Parseval \eqref{parseval})}\\
&\leq \lVert f \rVert_{L^{q'}(\ff^d)} \lVert f*\mu^{\lor} \rVert_{L^{q}(\ff^d)} \qquad \text{(by Hölder's inequality)}. 
\end{align*}
Having this estimate in mind, in order to prove the main result, it suffices to establish
\begin{equation} \label{want}
\|f \ast \mu^{\lor}\|_{L^{q}(\fd)} \lesssim  \| f \|_{L^{q'}(\fd)}
\end{equation}
for the appropriate range of $q$. Let $\delta_0$ denote the Dirac delta measure at $0\in \mathbb{F}_q^d$.  Then, by Minkowski's inequality, 
\[
\|f \ast \mu^{\lor}\|_{L^{q}(\fd)} \leq \|f \ast (\mu^{\lor}-  \delta_0)\|_{L^{q}(\fd)} +  \| f \ast   \delta_0\|_{L^{q}(\fd)}. 
\]
Since $f*\delta_0=f$,  for the latter term we easily obtain
\begin{equation} \label{easy}
 \| f \ast   \delta_0\|_{L^{q}(\fd)}  =   \| f \|_{L^{q}(\fd)} \leq \| f \|_{L^{q'}(\fd)},
\end{equation}
where the last inequality follows from the fact that $1\leq q'\leq 2 \leq q$.  For the other term, we obtain the desired $L^{q'} \to L^q$ bound by interpolation. Since $1+\frac{1}{2p}=\frac{2p-1}{2p}+\frac{1}{p}$,  applying Young's convolution inequality gives
\begin{equation*}
\|f \ast (\mu^{\lor}-  \delta_0)\|_{L^{2p}(\fd)} \leq \|f  \|_{L^{\frac{2p}{2p-1}}(\fd)}\| \mu^{\lor}-\delta_0 \|_{L^{p}(\fd)}. 
\end{equation*}
It is not difficult to verify that $\| \mu^{\lor}-\delta_0 \|_{L^{p}(\fd)}=|\ff|^{\frac{d}{p}} \|\widehat{\mu} \|_p$, and since $\|\widehat{\mu} \|_p\lesssim |\mathbb{F}|^{-\frac{\beta_p}{2}}$, we obtain:
\begin{equation} \label{i1}
\|f \ast (\mu^{\lor}-  \delta_0)\|_{L^{2p}(\fd)} \leq |\mathbb{F}|^{\frac{d}{p}-\frac{\beta_p}{2}}\|f\|_{L^{\frac{2p}{2p-1}}(\fd)} .
\end{equation}
On the other hand, we have:
\begin{align*}
&  \hspace{-5mm} \|f \ast (\mu^{\lor}-  \delta_0)\|_{L^{2}(\fd)}^2 \\
&=  |\ff|^{-d} \sum_{\xi \in \fd} |\widehat f (\xi)|^2  | |\widehat{(\mu^{\lor}- \delta_0)}(\xi)|^2  \qquad \text{(by Plancherel \eqref{plancherel} and the convolution law)}\\
&=|\ff|^{-d} \sum_{\xi \in \fd} |\widehat f (\xi)|^2  | |\ff|^d\mu(\xi)- \widehat{\delta_0}(\xi)|^2  \qquad \text{(by inversion \eqref{inversion})}\\
&\leq \sup_{\xi\in \mathbb{F}^d}||\mathbb{F}|^d\mu(\xi)-1|^2\sum_{x \in \mathbb{F}^d}|f(x)|^2 \qquad \text{(by Plancherel \eqref{plancherel})} \\
&\lesssim|\ff|^{2d-2\alpha}    \|f  \|_{L^{2}(\fd)}^2. 
\end{align*}
Therefore, we have 
\begin{equation}
\label{i2}
    \|f \ast (\mu^{\lor}-  \delta_0)\|_{L^{2}(\fd)}\lesssim |\ff|^{d-\alpha}    \|f  \|_{L^{2}(\fd)}. 
\end{equation}
Applying the Riesz--Thorin interpolation theorem with \eqref{i1} and \eqref{i2}, and parameter $\lambda \in [0,1]$, and using \eqref{easy}, we get
\[
\|f \ast (\mu^{\lor}-  \delta_0)\|_{L^{q(\lambda)}(\fd)} \lesssim  |\ff|^{(d/p-\beta_p/2)(1-\lambda)+(d-\alpha) \lambda }  \| f \|_{L^{q'(\lambda)}(\fd)},
\]
where
\[
q(\lambda) = \frac{2p}{1-\lambda+\lambda p}.
\]
In particular, we obtain \eqref{want} provided that
\[ 
(d/p-\beta_p/2)(1-\lambda)+(d-\alpha) \lambda \leq 0.
\]
Therefore, the optimal choice of $\lambda$ is 
\[
\lambda = \frac{\beta_p/2-d/p}{\beta_p/2 -d/p+d-\alpha} \in [0,1],
\]
noting that we are assuming $\beta_p  \geq \frac{2d}{p}$. For this choice of $\lambda$ we obtain the $L^{2} \to L^{q_0}$ extension estimate, where
\[
q_0 = \frac{p(4d-4\alpha+2\beta_p)-4d}{p\beta_p-2\alpha}=2+\frac{(4p-4)(d-\alpha)}{p\beta_p-2\alpha},  
\]
which is the required endpoint estimate. For larger $q$, the result follows by monotonicity.

\section{Simple examples}

In this section, we investigate a number of simple examples with an emphasis on when our extension estimate improves upon the Mockenhaupt--Tao estimate. 

\subsection{Products of spheres}

Let $k \geq 2$ and $m \geq 1$ be integers, and let $E\coloneqq (S_1^{k-1})^m = S_1^{k-1} \times \cdots \times S_1^{k-1} \subseteq \ff^{km}$ be the $m$-fold product of the sphere of radius 1 (or any non-zero radius) in $\ff^k$.  Using the fact that $S_1^{k-1}$ is a Salem set with $S_1^{k-1} \approx |\ff|^{k-1}$ and applying \cite[Theorems 3.2 and 3.5]{fraserfinite}, we obtain the following.
\begin{prop} \label{sphereproduct}
Let $p \geq 2$. The set $E$ is $(p,s)$-Salem if and only if 
\[
s \leq s_p:=\min\left\{ \frac{2k(m-1)+p(k-1)}{2mp(k-1)}, \frac{1}{2}\right\}.
\]
\end{prop}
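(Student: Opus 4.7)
The plan is to compute $\|\widehat\mu\|_p$ directly using the product structure and then match the resulting exponent with $s_p$. Let $\nu$ denote the uniform probability measure on $S_1^{k-1} \subseteq \ff^k$, so that $\mu = \nu^{\otimes m}$ and $\widehat\mu(\xi_1,\ldots,\xi_m) = \prod_{i=1}^m \widehat\nu(\xi_i)$. Since $|S_1^{k-1}| \approx |\ff|^{k-1}$, we have $|E| \approx |\ff|^{m(k-1)}$, whence $\alpha = m(k-1)$. Isolating the $\xi=0$ contribution, a direct factorisation yields
\[
\|\widehat\mu\|_p^p = |\ff|^{-km}\bigl(T^m - 1\bigr), \qquad T := \sum_{\xi \in \ff^k} |\widehat\nu(\xi)|^p,
\]
so the problem reduces to estimating the single-sphere sum $T$ sharply.

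Next, I would pin down $T$ up to constants. Since $S_1^{k-1}$ is a Salem set, $|\widehat\nu(\xi)| \lesssim |\ff|^{-(k-1)/2}$ for $\xi \neq 0$, which gives the upper bound $T \lesssim 1 + |\ff|^{k - p(k-1)/2}$. For the matching lower bound, Plancherel \eqref{plancherel} gives $\sum_{\xi \neq 0}|\widehat\nu(\xi)|^2 \approx |\ff|$, and pigeonhole together with the pointwise Salem bound forces $|\widehat\nu(\xi)| \gtrsim |\ff|^{-(k-1)/2}$ on a set of $\xi$ of cardinality $\gtrsim |\ff|^k$. Hence $T \approx 1 + |\ff|^{k - p(k-1)/2}$.

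Two regimes then arise based on the sign of $k - p(k-1)/2$. If $p \leq \frac{2k}{k-1}$, the $|\ff|^{k - p(k-1)/2}$ term dominates $T$, so $T^m \approx |\ff|^{m(k - p(k-1)/2)}$, giving $\|\widehat\mu\|_p \approx |\ff|^{-m(k-1)/2} = |\ff|^{-\alpha/2}$ and matching $s_p = 1/2$. If instead $p > \frac{2k}{k-1}$, then $T = 1 + o(1)$, so $T^m - 1 \approx m(T-1) \approx |\ff|^{k - p(k-1)/2}$, yielding $\|\widehat\mu\|_p \approx |\ff|^{-k(m-1)/p - (k-1)/2}$. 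A short rearrangement checks that this exponent equals $\alpha s_p$ with $s_p = \frac{2k(m-1)+p(k-1)}{2mp(k-1)}$. Since the estimates on $\|\widehat\mu\|_p$ are two-sided, both directions of the if and only if follow immediately.

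The main obstacle will be the sharpness step, namely the lower bound on $T$, since the upper bound is automatic from the Salem property. This requires that the Salem bound on the sphere is attained for a positive-density set of frequencies, which can be established via the explicit Gauss-sum formula for $\widehat\nu$, or sidestepped by directly invoking the product-set machinery of \cite[Theorems 3.2 and 3.5]{fraserfinite}, designed precisely to deliver the $(p,s)$-Salem exponent of a product in terms of the exponents of its factors.
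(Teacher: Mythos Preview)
Your proposal is correct and takes a genuinely different route from the paper. The paper establishes Proposition~\ref{sphereproduct} by induction on $m$, invoking the general product-set results \cite[Theorems~3.2 and~3.5]{fraserfinite} at each step to pass from $(S_1^{k-1})^{m-1}$ to $(S_1^{k-1})^m$; no explicit computation is carried out. By contrast, you compute $\|\widehat\mu\|_p$ in one shot via the tensor factorisation $\|\widehat\mu\|_p^p = |\ff|^{-km}(T^m-1)$, reducing everything to the single-sphere quantity $T$, and then handle $T$ using only the Salem bound for $S_1^{k-1}$ together with the Plancherel/pigeonhole argument for sharpness. Your approach is more elementary and self-contained (it does not need the black-box product machinery), while the paper's approach is more modular and would apply verbatim with $S_1^{k-1}$ replaced by any Salem set. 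Both yield two-sided estimates, so the ``only if'' direction is covered either way; your pigeonhole step (Salem upper bound plus $L^2$ mass $\approx |\ff|$ forcing $\gtrsim |\ff|^k$ frequencies at the Salem scale) is a clean way to get the lower bound on $T$ without appealing to explicit Gauss-sum formulae.
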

In fact, \cite[Theorems 3.2 and 3.5]{fraserfinite} give estimates for the product of two sets, and so one establishes the above formula inductively, using $(S_1^{k-1})^m = (S_1^{k-1})^{m-1} \times S_1^{k-1}$ as the inductive step, and the sphere $S_1^{k-1}$ itself as the base case. We omit the details. 

Applying Corollary \ref{cormain} together with the formula for $s_p$ in Proposition \ref{sphereproduct} allows us to obtain an extension estimate  for $\mu$, the surface measure on $E$, for
\[
q \geq  \frac{p(2d-2\alpha+2\alpha s_p)-2d}{ \alpha s_p p-\alpha} 
\]
whenever  $s_p \geq \frac{d}{p\alpha}$. Here, $d=mk$ is the ambient spatial dimension, and $\alpha=(k-1)m$ is the `dimension' of $E$.  The formula for $s_p$ has a phase transition at $p=\frac{2k}{k-1}$, and it is easy to see that the extension bound is optimised either at this  choice of $p$ or at $p = \infty$, with the $p=\infty$ case corresponding to the Mockenhaupt--Tao bound.   Moreover, for $p=\frac{2k}{k-1}$, $s_p=\frac{1}{2} =\frac{d}{p\alpha}$, so the bound is indeed valid at both extremes. Finally, since $s_p <\frac{d}{p\alpha}$ for $p < \frac{2k}{k-1}$, we get a range of $q$ where the extension estimate fails by appealing to Proposition \ref{converse}.  Pulling this all together, we get the following result.
\begin{cor}
    For $\mu$, the surface measure on $E\coloneqq(S_1^{k-1})^m\subseteq\mathbb{F}^{km}$,
\[
    \|\widehat{f\mu}\|_{L^q(\mathbb{F}^{km})}\lesssim \|f\|_{L^2(\mu)}
\]
holds for all functions $f:\mathbb{F}^{km}\to \mathbb{C}$, provided that 
\[
q \geq  2+   \frac{\min\{2k+2,4m\}}{k-1}.
\]
Moreover, the above extension estimate fails for
\[
q < 2+\frac{2}{k-1}.
\]
\end{cor}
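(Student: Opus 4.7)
The plan is to apply Corollary \ref{cormain} (for the positive assertion) and Proposition \ref{converse} (for the failure assertion), with the sharp Salem exponents supplied by Proposition \ref{sphereproduct}. Throughout I would write $d=km$ and $\alpha=m(k-1)$, so $d-\alpha=m$. The formula for $s_p$ splits into two regimes: $s_p=\frac{1}{2}$ for $2\leq p\leq \frac{2k}{k-1}$, and $s_p=\frac{2k(m-1)+p(k-1)}{2mp(k-1)}$ for $p\geq \frac{2k}{k-1}$, decreasing to $s_\infty=\frac{1}{2m}$ as $p\to\infty$.

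For the positive direction, a short algebraic check will show that the admissibility condition $s_p\geq \frac{d}{p\alpha}=\frac{k}{p(k-1)}$ of Corollary \ref{cormain} is equivalent to $p\geq \frac{2k}{k-1}$, so the corollary applies precisely in the upper regime. Substituting the formula for $s_p$ into $2+\frac{(2p-2)(d-\alpha)}{\alpha p s_p-\alpha}$ and simplifying leads to
\[
q(p)=2+\frac{4m(p-1)}{p(k-1)+2(m-k)}.
\]
Next I would differentiate to obtain $\operatorname{sgn}(q'(p))=\operatorname{sgn}(2m-k-1)$, which forces $q(p)$ to be monotone on $[\tfrac{2k}{k-1},\infty]$ so its minimum is attained at one of the two endpoints. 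Direct evaluation gives $q(\tfrac{2k}{k-1})=2+\tfrac{2k+2}{k-1}$ (via Corollary \ref{cormain}) and $\lim_{p\to\infty}q(p)=2+\tfrac{4m}{k-1}$ (the Mockenhaupt--Tao bound from Theorem \ref{moctaoresult}, since $s_\infty=\tfrac{1}{2m}$ yields $\beta_\infty=2\alpha s_\infty=k-1$). Whichever of $2m-k-1$ is positive or negative, the smaller endpoint coincides with $\frac{\min\{2k+2,4m\}}{k-1}$, which is the claimed threshold.

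For the failure half, I would apply Proposition \ref{converse} in the lower regime. For $p<\frac{2k}{k-1}$, the sharpness in Proposition \ref{sphereproduct} gives $\|\widehat\mu\|_p\approx |\mathbb{F}|^{-\alpha/2}$, while in this range $\frac{2d}{p}>\alpha$, so one may choose $\beta_p$ with $\alpha<\beta_p\leq \frac{2d}{p}$ for which the hypothesis $\|\widehat\mu\|_p\gg |\mathbb{F}|^{-\beta_p/2}$ holds. Proposition \ref{converse} then rules out every $L^{q_0}\to L^q$ extension estimate for $q\leq p$; letting $p$ approach $\frac{2k}{k-1}$ from below produces the failure range $q<2+\frac{2}{k-1}$, as required. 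The only non-mechanical step in this plan is the derivative computation that pins down the monotonicity of $q(p)$ on the upper regime; every other ingredient is direct substitution into results already established in the excerpt.
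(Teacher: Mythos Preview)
Your proposal is correct and follows essentially the same route as the paper: apply Corollary \ref{cormain} with the sharp exponents from Proposition \ref{sphereproduct}, observe that the admissibility condition $s_p\ge\tfrac{d}{p\alpha}$ singles out the regime $p\ge\tfrac{2k}{k-1}$, optimise the resulting threshold over $p$ to land on the endpoints $p=\tfrac{2k}{k-1}$ and $p=\infty$, and invoke Proposition \ref{converse} in the lower regime for the failure statement. Your explicit formula $q(p)=2+\tfrac{4m(p-1)}{p(k-1)+2(m-k)}$ and the derivative check $\operatorname{sgn}(q'(p))=\operatorname{sgn}(2m-k-1)$ simply make precise what the paper asserts as ``easy to see.''
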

The Mockenhaupt--Tao result gives the desired extension estimate for
\[
q \geq 2+\frac{4m}{k-1},
\]
so our estimate is strictly better whenever $k < 2m-1.$

\subsection{The sphere of radius zero}

Here, we consider a concrete example involving the sphere of radius zero. Let $E \coloneqq S_0^2 \times S_1^2 \subseteq \ff^6$, where $S_0^2 \subseteq \ff^3$ is the sphere of radius zero and $S_1^2 \subseteq \ff^3$ is the sphere of radius 1 (or any non-zero radius). Let   $\mu$ denote the surface measure on $E$.  Then, by \cite[Theorems 3.1,  3.2 and 3.5]{fraserfinite}, the set $E$ is $(p,s)$-Salem if and only if
\[
s \leq \min\left\{\frac{1}{8}+\frac{1}{p}, \frac{3}{4p}+\frac{1}{4}\right\}.
\]
This threshold exceeds $\frac{d}{p\alpha} = \frac{3}{2p}$ for $p \geq 4$ and combining this with Corollary \ref{cormain}, we obtain
\[
    \|\widehat{f\mu}\|_{L^q(\mathbb{F}^6)}\lesssim \|f\|_{L^2(\mu)}
\]
 holds for all functions $f:\mathbb{F}^6 \to \mathbb{C}$ and all $q \geq 8$. This range is achieved uniquely at $p=4$, while the Mockenhaupt--Tao range (corresponding to $p=\infty$) is $q \geq 10$. Moreover, Proposition \ref{converse} shows that the above $L^2\to L^q$ extension estimate fails for $q< 4$.

\subsection{Cutoff cylinders}

First, consider the cylinder $\ff^n \times S_1^k \subseteq \ff^{n+k+1}$.  For these examples, our results do not improve upon those of Mockenhaupt--Tao. However, perhaps surprisingly, we do obtain an improvement for the \emph{cutoff cylinders}
\[
E \coloneqq(\ff^n \setminus \ff^m) \times S_1^k \subseteq \ff^{n+k+1}
\]
for integers $n >m\geq 1$ and $k >2(n-m)$.  Here, we view $\ff^m$ as a subset of $\ff^n$. Applying  \cite[Theorem 3.2 and Proposition 3.8]{fraserfinite}, we find that $E$ is $(p,s)$-Salem if and only if
\begin{equation} \label{cutformula}
s \leq \min\left\{\frac{n/p+k/2}{n+k},\frac{n-m+m/p+(k+1)/p }{n+k}\right\}.
\end{equation}
We may apply Theorem \ref{main} for $p \geq \frac{2(k+1)}{k}$, and the optimal result is obtained at
\[
p=\frac{2(k+1+m-n)}{k+2(m-n)} < \infty,
\]
which corresponds to the phase transition in the right-hand side of \eqref{cutformula}.  For this value of $p$, we obtain
\[
    \|\widehat{f\mu}\|_{L^q(\ff^{n+k+1})}\lesssim \|f\|_{L^2(\mu)}
\]
for all functions $f:\mathbb{F}^{n+k+1}\to \mathbb{C}$, provided that 
 \[
 q \geq \frac{4+k(2n-2m+4)}{k(n-m+1)}.
 \]
 On the other hand, the Mockenhaupt--Tao result yields the range
 \[
 q \geq \frac{2+2(n-m)}{n-m},
 \]
 which is strictly weaker, recalling that $k >2(n-m)$.

\section{Applications}

\subsection{Large Sidon sets}

Here we prove that large Sidon sets satisfy non-trivial restriction estimates, whereas smaller Sidon sets need not. A \emph{Sidon set} $E \subseteq \fd$ is a set for which the equation $a+b=c+d$ implies $\{a,b\} = \{c,d\}$ for all $a,b,c,d \in E$.  That is, $E$ generates the largest possible number of distinct sums.  As a consequence, if $E$ is Sidon, then $|E| \lesssim q^{\frac{d}{2}}$ always holds, but it is easy to construct  Sidon sets with $|E| \approx q^{\frac{d}{2}}$.  We show that non-trivial restriction estimates always hold for such large Sidon sets.

We are unaware of any results providing non-trivial uniform bounds for the Fourier transform of the surface measure on a general Sidon set, and so, without more information about $E$, we are unable to apply the Mockenhaupt--Tao result in this case.

\begin{cor} \label{sidonrestriction}
Let $E \subseteq \fd$ be a Sidon set with $|E| \approx q^{\frac{d}{2}}$, and let $\mu$ be the surface measure on $E$.  Then 
\[
    \|\widehat{f\mu}\|_{L^8(\fd)}\lesssim \|f\|_{L^2(\mu)}
\]
 holds for all functions $f:\mathbb{F}^d \to \mathbb{C}$.
\end{cor}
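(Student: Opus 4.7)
The plan is to invoke Corollary \ref{cormain} with the parameters $\alpha = d/2$, $p = 4$, and $s = 1/2$, which yield the endpoint $q = 8$ exactly. For this to apply, the key step is to establish that every Sidon set $E \subseteq \fd$ with $|E| \approx |\ff|^{d/2}$ is a $(4, \tfrac{1}{2})$-Salem set; the rest reduces to verifying numerical thresholds.

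To obtain the $(4, \tfrac{1}{2})$-Salem property, I would exploit the classical link between the $L^4$ norm of the Fourier transform and additive energy. Writing $\mathrm{E}(E) \coloneqq |\{(a,b,c,d) \in E^4 : a+b = c+d\}|$, a short case analysis using the Sidon condition ($a+b=c+d$ forces $\{a,b\}=\{c,d\}$) gives $\mathrm{E}(E) \lesssim |E|^2$. On the Fourier side, the convolution law \eqref{convolution law} combined with Plancherel \eqref{plancherel} gives
\[
\sum_{\xi \in \fd} |\widehat{E}(\xi)|^4 = \sum_{\xi \in \fd} |\widehat{E\ast E}(\xi)|^2 = |\ff|^d \sum_{x \in \fd} |E \ast E(x)|^2 = |\ff|^d \, \mathrm{E}(E) \lesssim |\ff|^d |E|^2.
\]
Since $\widehat{\mu} = \widehat{E}/|E|$, dividing by $|E|^4 |\ff|^d$ yields $\|\widehat{\mu}\|_4^4 \lesssim |E|^{-2}$, i.e.\ $\|\widehat{\mu}\|_4 \lesssim |E|^{-1/2}$, which is exactly the required $(4, \tfrac{1}{2})$-Salem bound (and, using $|E| \approx |\ff|^{d/2}$, also reads $\|\widehat{\mu}\|_4 \lesssim |\ff|^{-d/4}$).

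Plugging $\alpha = d/2$, $p = 4$, $s = 1/2$ into Corollary \ref{cormain}, the threshold $s \geq d/(p\alpha) = 1/2$ is satisfied with equality, and
\[
2+\frac{(2p-2)(d-\alpha)}{\alpha p s - \alpha} = 2+\frac{6 \cdot (d/2)}{d/2} = 8
\]
is precisely the desired endpoint.

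The only genuine technical step is the additive-energy bound, and this is immediate from the Sidon definition, so I expect no real obstacles. As a sanity check that $p=4$ is the right choice, one can interpolate the $L^4$ bound with $\|\widehat{\mu}\|_2 \lesssim |E|^{-1/2}$ (Plancherel) and $\|\widehat{\mu}\|_\infty \leq 1$ to obtain $\|\widehat{\mu}\|_p \lesssim |E|^{-\min\{1/2, 2/p\}}$; substituting this into Corollary \ref{cormain} yields the range $q \geq 2p$ for every $p \geq 4$, which is minimised precisely at $p = 4$.
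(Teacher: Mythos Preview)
Your proposal is correct and follows essentially the same route as the paper: establish that a large Sidon set is $(4,\tfrac12)$-Salem, then apply Corollary~\ref{cormain} with $\alpha=d/2$, $p=4$, $s=1/2$ to land exactly on $q=8$. The paper obtains the Salem property by citing \cite[Corollary 7.3]{fraserfinite} (which gives $(p,2/p)$-Salem for all $p\ge 4$), whereas you supply the underlying additive-energy argument directly; your final ``sanity check'' interpolation then recovers the same family of exponents and the same optimisation at $p=4$.
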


\begin{proof}[Proof of Corollary \ref{sidonrestriction}]
It follows from \cite[Corollary 7.3]{fraserfinite} that $E$ is $(p,\frac{2}{p})$-Salem for all $p \geq 4$, which means that the condition $s \geq \frac{d}{p\alpha}$ required to apply our main result is satisfied for all such $p$.  Then Corollary \ref{cormain} gives that
\[
    \|\widehat{f\mu}\|_{L^q(\fd)}\lesssim \|f\|_{L^2(\mu)}
\]
 holds for all functions $f:\mathbb{F}^d \to \mathbb{C}$ and all 
 \[
 q \geq \frac{p(2d-2(d/2)+2(d/2)(2/p))-2d}{(d/2)(2/p)p-d/2} = \frac{pd}{d/2} = 2p,
 \]
 and this range is optimised by choosing $p=4$ giving $q \geq 8$. 
\end{proof}

At first glance, the fact that Corollary \ref{sidonrestriction} only applies to Sidon sets with $|E| \approx q^{\frac{d}{2}}$ might seem unsatisfying.  However, this condition is close to optimal.  Indeed, suppose $E \subseteq \ff^{d-1}$ is a Sidon set with $|E| \approx q^{\frac{d-1}{2}}$, and embed it as a subset of $\ff^d$.  Then \cite[Corollary 3.6]{fraserfinite} shows that $E$ is \emph{not} $(p,s)$-Salem for $s > \min\big\{\frac{2}{p},\frac{1}{2}\big\}$. Proposition \ref{converse} then ensures that the extension estimate fails for \emph{all} $q <\infty$. In particular, letting $d \to \infty$ gives a family of Sidon sets with `dimension' $\sim \frac{d}{2}$ for which there are no non-trivial extension estimates.

\subsection{Hamming varieties}

In this section, we consider an interesting and explicit example of a variety in $\mathbb{F}^d$ which does not admit good uniform bounds for the Fourier transform away from the origin, but does exhibit better bounds on average.  This type of behaviour suggests that our approach should yield  a better $L^2\to L^q$ extension estimate for the surface measure on this variety than  the Mockenhaupt--Tao result.  Indeed, we will see this is the case.

For each $j\in \mathbb{F}^*$, the Hamming variety $H_j$ in $\mathbb{F}^d$ is defined by 
\begin{equation*}
    H_j =\Big\{x=(x_1,\dots,x_d)\in \mathbb{F}^d: \prod_{k=1}^dx_k=j\Big\}.
\end{equation*}

Since $j\neq 0$, it is not difficult to verify that $|H_j|=(|\mathbb{F}|-1)^{d-1}\approx |\mathbb{F}|^{d-1}$. Let $\mu_j$ denote the surface measure on the Hamming variety $H_j$. The following result provides an explicit expression for $\widehat{\mu_j}$. The proof is fairly straightforward and can be found in \cite{hamming}.

\begin{lma}
\label{hamming decay}
    For each $j\in \mathbb{F}^*$, let $\mu_j$ be the surface measure on the Hamming variety $H_j$. For each $m\in \mathbb{F}^d$, let $\ell_m$ denote the number of coordinates of $m$ which are equal to zero. Then: 
    \begin{equation*}
        \widehat{\mu_j}(m)=(-1)^{d-\ell_m}(|\mathbb{F}|-1)^{-(d-\ell_m)} \quad \textit{if} \quad 1\leq \ell_m\leq d.
    \end{equation*}
    Moreover, if $\ell_m=0$, then $|\widehat{\mu_j}(m)|\lesssim |\mathbb{F}|^{-\frac{d-1}{2}}$.
\end{lma}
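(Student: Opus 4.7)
The plan is to compute the exponential sum $\sum_{x\in H_j}\chi(-m\cdot x)$ directly and then normalise by $|H_j|=(|\mathbb{F}|-1)^{d-1}$; the latter count is immediate since $x_1,\ldots,x_{d-1}$ may be chosen freely in $\mathbb{F}^*$ and then $x_d$ is uniquely determined by the condition $\prod_k x_k = j\neq 0$.

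For $1\leq \ell_m\leq d$, I would split the index set into $A=\{k:m_k=0\}$ and $B=\{k:m_k\neq 0\}$, of sizes $\ell_m$ and $d-\ell_m$ respectively. Since $\chi(-m\cdot x)$ depends only on $x_B$, fixing $x_B\in(\mathbb{F}^*)^{|B|}$ and counting the $x_A\in(\mathbb{F}^*)^{|A|}$ that satisfy $\prod_{a\in A}x_a = j\big/\prod_{b\in B}x_b$ gives exactly $(|\mathbb{F}|-1)^{\ell_m-1}$ solutions (the target is nonzero because $j\neq 0$ and the $x_b$ are nonzero, so $\ell_m-1$ coordinates are free and the last is forced). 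The remaining sum in $x_B$ factorises coordinate-wise, and for each $b\in B$ the substitution $y=-m_b x_b$ yields $\sum_{x_b\in\mathbb{F}^*}\chi(-m_b x_b)=\sum_{y\in\mathbb{F}^*}\chi(y)=-1$, because $\chi$ is a nontrivial additive character. Hence
\[
\sum_{x\in H_j}\chi(-m\cdot x) = (|\mathbb{F}|-1)^{\ell_m-1}\,(-1)^{d-\ell_m},
\]
and dividing by $|H_j|$ produces the stated formula. The edge case $\ell_m = d$ (that is, $m=0$) reduces to $\widehat{\mu_j}(0)=1$, which is consistent with the formula.

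For $\ell_m=0$ every $m_k$ is nonzero, so $y_k:=-m_k x_k$ is a bijection of $(\mathbb{F}^*)^d$ that recasts the defining constraint $\prod_k x_k=j$ as $\prod_k y_k=c$ with $c:=(-1)^d j\prod_k m_k\in\mathbb{F}^*$. Consequently
\[
\sum_{x\in H_j}\chi(-m\cdot x) = \sum_{\substack{y\in(\mathbb{F}^*)^d\\ \prod_k y_k = c}}\chi(y_1+\cdots+y_d),
\]
which is a $d$-variable hyper-Kloosterman sum. The plan is to invoke the bound $\lesssim|\mathbb{F}|^{(d-1)/2}$ on this sum, which for $d=2$ is Weil's classical estimate for Kloosterman sums and for $d\geq 3$ is Deligne's bound (the Weil conjectures). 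Dividing by $|H_j|\approx|\mathbb{F}|^{d-1}$ then delivers $|\widehat{\mu_j}(m)|\lesssim|\mathbb{F}|^{-(d-1)/2}$.

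The main obstacle is precisely this hyper-Kloosterman estimate for $d\geq 3$, which is genuinely deep: a direct Gauss-sum expansion $S(c)=\frac{1}{|\mathbb{F}|-1}\sum_\psi\overline{\psi(c)}G(\psi,\chi)^d$ only yields $|\mathbb{F}|^{d/2}$ by the trivial triangle inequality, and the extra $|\mathbb{F}|^{1/2}$ of cancellation requires the Weil/Deligne machinery. Since the paper explicitly defers the full proof to \cite{hamming}, the cleanest option is to cite that reference at this step rather than to reproduce Deligne's argument.
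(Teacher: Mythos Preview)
Your argument is correct. The paper does not actually supply its own proof of this lemma; it simply remarks that ``the proof is fairly straightforward and can be found in \cite{hamming}'', so there is no in-paper argument to compare against directly. Your computation is the natural one and is precisely what one expects to appear in \cite{hamming}: for $1\le \ell_m\le d$ the factorisation over the nonzero coordinates of $m$ together with $\sum_{y\in\mathbb{F}^*}\chi(y)=-1$ gives the exact formula, and for $\ell_m=0$ the change of variables $y_k=-m_k x_k$ reduces the sum to a hyper-Kloosterman sum of rank $d$, for which the bound $\lesssim |\mathbb{F}|^{(d-1)/2}$ is Weil's estimate when $d=2$ and Deligne's in general. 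Your candid acknowledgement that this last input is genuinely deep, and that the cleanest option is to cite it rather than reprove it, is exactly in line with how the paper (and \cite{hamming}) treat this point.
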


Using these estimates, we can explicitly describe how the $L^p$ averages of $ \widehat{\mu_j}(m)$ behave.  This is a new example and is of interest in its own right in the context of the programme introduced in \cite{fraserfinite}.

\begin{prop} \label{hammingformula}
    The Hamming variety $H_j$ is $(p,s)$-Salem if and only if 
    \[
    s \leq \min\left\{\frac{1}{d-1}+\frac{1}{p}, \frac{1}{2}\right\}.
    \]
\end{prop}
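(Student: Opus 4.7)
The plan is to compute $\|\widehat{\mu_j}\|_p$ up to constants using the explicit formula from Lemma \ref{hamming decay}, and then translate the result into a $(p,s)$-Salem condition via $|H_j| \approx |\mathbb{F}|^{d-1}$. The proof will establish matching upper and lower bounds that peak at two different scales, and the transition between these scales produces the $\min$ in the statement.

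For the upper bound, I would stratify $\mathbb{F}^d \setminus \{0\}$ by $\ell_m$, the number of zero coordinates. The frequencies with exactly $\ell$ zero coordinates number $\binom{d}{\ell}(|\mathbb{F}|-1)^{d-\ell}$, and on these Lemma \ref{hamming decay} gives $|\widehat{\mu_j}(m)| \approx |\mathbb{F}|^{-(d-\ell)}$ when $\ell \ge 1$ and $|\widehat{\mu_j}(m)| \lesssim |\mathbb{F}|^{-(d-1)/2}$ when $\ell = 0$. Summing, one sees that the $\ell \geq 1$ contribution to $\|\widehat{\mu_j}\|_p^p$ is geometric in $d-\ell$ and, for $p \geq 1$, is dominated by $\ell = d-1$ yielding $|\mathbb{F}|^{-d}\cdot|\mathbb{F}|^{1-p}$; the $\ell = 0$ stratum contributes at most $|\mathbb{F}|^{-d}\cdot|\mathbb{F}|^{d - p(d-1)/2}$. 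Taking $p$-th roots gives
\[
\|\widehat{\mu_j}\|_p \lesssim \max\bigl(|\mathbb{F}|^{-1-(d-1)/p},\ |\mathbb{F}|^{-(d-1)/2}\bigr),
\]
and rewriting as $|\mathbb{F}|^{-(d-1)s}$ recovers the claimed min.

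The lower bound splits into two matching pieces. The first, $\|\widehat{\mu_j}\|_p \gtrsim |\mathbb{F}|^{-1-(d-1)/p}$, comes just from summing over the $d(|\mathbb{F}|-1)$ frequencies $m$ with exactly one non-zero coordinate, where Lemma \ref{hamming decay} is an equality and the bound is immediate. The second, $\|\widehat{\mu_j}\|_p \gtrsim |\mathbb{F}|^{-(d-1)/2}$, is the subtler one and I expect it to be the main obstacle: we only have an upper bound from the lemma when $\ell_m = 0$. My plan is to use Plancherel to show that $\sum_{m \neq 0} |\widehat{\mu_j}(m)|^2 = |\mathbb{F}|^d|H_j|^{-1} - 1 \approx |\mathbb{F}|$, then to subtract off the contribution from $\ell_m \geq 1$ (which is $O(1)$ as above), so that the stratum $\ell_m = 0$ contributes $\approx |\mathbb{F}|$ to this Plancherel sum. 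Combining this with the uniform upper bound $|\widehat{\mu_j}(m)|^2 \lesssim |\mathbb{F}|^{-(d-1)}$ from the lemma and a pigeonhole/Chebyshev-style argument, I can conclude that $\gtrsim |\mathbb{F}|^d$ of these frequencies satisfy $|\widehat{\mu_j}(m)| \gtrsim |\mathbb{F}|^{-(d-1)/2}$, which gives the desired $L^p$ lower bound.

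Finally, I would combine the two lower bounds, note that $\|\widehat{\mu_j}\|_p \approx \max(|\mathbb{F}|^{-1-(d-1)/p}, |\mathbb{F}|^{-(d-1)/2})$, and translate the exponent through $\|\widehat{\mu_j}\|_p \approx |H_j|^{-s} \approx |\mathbb{F}|^{-(d-1)s}$ to obtain $s \le \min\{1/(d-1) + 1/p,\ 1/2\}$ in both directions, establishing the iff.
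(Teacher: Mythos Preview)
Your proposal is correct and follows essentially the same approach as the paper: both stratify by $\ell_m$, use Lemma~\ref{hamming decay} for the upper bound, and use the exact formula on the strata $\ell_m\ge 1$ for the first lower bound. The one difference is the $s\le \tfrac12$ lower bound: the paper simply invokes the general fact (stated earlier in the paper, and valid for any $E$ with $|E|=o(|\ff|^d)$) that no such set can be $(p,s)$-Salem for $s>\tfrac12$, whereas you re-derive this for $H_j$ via Plancherel and a pigeonhole argument on the $\ell_m=0$ stratum. Your argument is sound and more self-contained, but it is doing more work than necessary---a direct H\"older inequality between $\ell^2$ and $\ell^p$ on $\ff^d\setminus\{0\}$ already gives $\|\widehat\mu\|_p\gtrsim |E|^{-1/2}$ for $p\ge 2$ from Plancherel alone, without singling out the $\ell_m=0$ frequencies.
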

\begin{proof}[Proof of Proposition \ref{hammingformula}]
    We first compute $\lVert \widehat{\mu_j}\rVert_p$. By definition, and then using Lemma \ref{hamming decay},
\begin{align*}
    \lVert \widehat{\mu_j}\rVert_p^p&=|\mathbb{F}|^{-d}\sum_{m\neq 0}|\widehat{\mu_j}(m)|^p\\
    &=|\mathbb{F}|^{-d}\sum_{k=1}^{d-1}\sum_{\substack{m\neq 0 \\ \ell_m=k}}|\widehat{\mu_j}(m)|^p+|\mathbb{F}|^{-d}\sum_{\substack{m\neq 0 \\ \ell_m=0}}|\widehat{\mu_j}(m)|^p\\
    &\lesssim |\mathbb{F}|^{-d}\sum_{k=1}^{d-1}\binom{d}{k}(|\mathbb{F}|-1)^{d-k}(|\mathbb{F}|-1)^{-p(d-k)}+|\mathbb{F}|^{-d}(|\mathbb{F}|-1)^d|\mathbb{F}|^{-\frac{p(d-1)}{2}}\\
    & \approx |\mathbb{F}|^{-d} \sum_{k=1}^{d-1}|\mathbb{F}|^{d-k+pk-pd}+|\mathbb{F}|^{-\frac{p(d-1)}{2}}\\
    & \approx |\mathbb{F}|^{-p-d+1}+|\mathbb{F}|^{-\frac{p(d-1)}{2}}\\
    &\lesssim |\mathbb{F}|^{-\min \left\{p+d-1, \frac{p(d-1)}{2}\right\}}.
\end{align*}
Therefore,
\begin{equation*}
    \lVert \widehat{\mu_j}\rVert_p \lesssim |\mathbb{F}|^{-\min\{1+\frac{d-1}{p}, \frac{d-1}{2}\}} \approx |H_j|^{-\min\{\frac{1}{d-1}+\frac{1}{p}, \frac{1}{2}\}},
\end{equation*}
since $|H_j|\approx |\mathbb{F}|^{d-1}$. This shows that the Hamming variety $H_j$ is  $(p,s)$-Salem for all
    \[
    s \leq \min\left\{\frac{1}{d-1}+\frac{1}{p}, \frac{1}{2}\right\}.
    \]
    For the other direction, following the argument above but dropping the second term (corresponding to the case when $\ell_m=0$) and noting that Lemma \ref{hamming decay} provides a precise formula in the other cases, we see that
    \begin{equation*}
    \lVert \widehat{\mu_j}\rVert_p \gtrsim  |H_j|^{-\left(\frac{1}{d-1}+\frac{1}{p}\right)}.
\end{equation*}
This proves that  $H_j$ is not  $(p,s)$-Salem for all
    \[
    s >  \frac{1}{d-1}+\frac{1}{p}, 
    \]
    and since $|H_j|\approx |\mathbb{F}|^{d-1}$, it  also cannot be $(p,s)$-Salem for $s>\frac{1}{2}$, and the result follows.
\end{proof}

Now we   apply Corollary \ref{cormain} to obtain an $L^2\to L^q$ extension estimate.

\begin{cor} \label{hammingresult}
Let $H_j$ be a Hamming variety in $\mathbb{F}^d$, and let $\mu_j$ be the surface measure on $H_j$. Then 
\begin{equation*}
    \lVert \widehat{f\mu_j}\rVert_{L^q(\mathbb{F}^d)}\lesssim \lVert f\rVert_{L^2(\mu)}
\end{equation*}
holds for all functions $f:\mathbb{F}^d\to \mathbb{C}$, provided that $q\geq \frac{3d-1}{d-1}$.
\end{cor}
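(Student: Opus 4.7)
The plan is to apply Corollary \ref{cormain} with the Fourier data for the Hamming variety supplied by Proposition \ref{hammingformula}, and then optimise the resulting threshold for $q$ over the parameter $p$.

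First, I would fix $\alpha = d-1$, which is valid because $|H_j| = (|\mathbb{F}|-1)^{d-1} \approx |\mathbb{F}|^{d-1}$. By Proposition \ref{hammingformula}, for each $p \in [2,\infty]$, the set $H_j$ is $(p,s_p)$-Salem with
\[
s_p = \min\left\{\frac{1}{d-1}+\frac{1}{p},\ \frac{1}{2}\right\}.
\]
The eligibility condition $s_p \geq \frac{d}{p \alpha} = \frac{d}{p(d-1)}$ from Corollary \ref{cormain} must be checked. In the first branch, $\frac{1}{d-1}+\frac{1}{p} \geq \frac{d}{p(d-1)}$ is equivalent to $p + (d-1) \geq d$, i.e.\ $p \geq 1$, which is automatic. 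In the second branch, $\tfrac{1}{2}\geq \tfrac{d}{p(d-1)}$ holds for all sufficiently large $p$, so there is no constraint that competes with the other ranges I care about.

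Substituting $\alpha = d-1$ and $d - \alpha = 1$ into the threshold
\[
q \geq 2 + \frac{(2p-2)(d-\alpha)}{\alpha p s_p - \alpha} = 2 + \frac{2(p-1)}{(d-1)(p s_p - 1)},
\]
I get two expressions depending on which branch of $s_p$ is active. In the first branch $s_p = \tfrac{1}{d-1}+\tfrac{1}{p}$, which yields $p s_p - 1 = \tfrac{p}{d-1}$, so the bound collapses to $q \geq 4 - \tfrac{2}{p}$; this is increasing in $p$. In the second branch $s_p = \tfrac{1}{2}$, which yields $ps_p - 1 = \tfrac{p-2}{2}$, so the bound becomes $q \geq 2 + \tfrac{4(p-1)}{(d-1)(p-2)}$; this is decreasing in $p$. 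Consequently the optimum of this min-max game sits exactly at the phase transition where the two expressions for $s_p$ agree, namely $\frac{1}{d-1}+\frac{1}{p}=\frac{1}{2}$, i.e.\ $p = \frac{2(d-1)}{d-3}$.

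Plugging this value of $p$ into either branch and simplifying should yield the required endpoint $q = \frac{3d-1}{d-1}$; the monotonicity in $q$ built into Corollary \ref{cormain} then delivers the estimate for all larger $q$. The main obstacle is only the bookkeeping: confirming that the two branches of $s_p$ do produce monotone but opposite behaviour in $p$, so that the phase transition is indeed the optimum, and then carrying out the elementary algebra that turns $p = \frac{2(d-1)}{d-3}$ into $q = \frac{3d-1}{d-1}$. One should also note that for the degenerate small-dimensional cases the formula still makes sense by taking $p = \infty$ in the appropriate branch, which matches the Mockenhaupt--Tao endpoint and so poses no additional difficulty.
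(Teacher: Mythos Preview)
Your proposal is correct and follows essentially the same approach as the paper: apply Corollary \ref{cormain} with $\alpha=d-1$ and the $s_p$ from Proposition \ref{hammingformula}, then optimise over $p$ to land at $p=\tfrac{2(d-1)}{d-3}$ and $q=\tfrac{3d-1}{d-1}$. In fact you supply more detail than the paper does---the paper simply writes ``with some computation, one can show that the optimal choice is $p=\frac{2(d-1)}{d-3}$''---whereas you explicitly verify the monotonicity of each branch to justify that the phase transition is the optimum.
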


\begin{proof}
  Directly from Corollary \ref{cormain} and Proposition \ref{hammingformula}, we obtain that   if $\min\big\{\frac{1}{d-1}+\frac{1}{p},\frac{1}{2}\big\}\geq \frac{d}{p(d-1)}$, then 
\begin{equation*}
    \lVert \widehat{f\mu_j}\rVert_{L^q(\mathbb{F}^d)}\lesssim \lVert f\rVert_{L^2(\mu)}
\end{equation*}
holds for all functions $f:\mathbb{F}^d \to \mathbb{C}$, and all  $$q\geq 2 + \frac{(2p-2)}{p(d-1)\min\big\{\frac{1}{d-1}+\frac{1}{p},\frac{1}{2}\big\}-(d-1)}.$$
With some computation, one can show that the optimal choice is $p=\frac{2(d-1)}{d-3}$, and the result follows.
\end{proof}

It is important to note that the Mockenhaupt--Tao result (Theorem ~\ref{moctaoresult}) gives a weaker range for $q$, namely $q\geq 4$. That said, in \cite{hamming}, an even better range 
\[
q \geq \frac{2(d+1)}{d-1}
\]
is obtained, but it is  conjectured that the sharp range is in fact
\[
q \geq \frac{2d}{d-1}.
\]
Finally, we note that Proposition \ref{converse} and Proposition \ref{hammingformula} together show that the $L^2\to L^q$ extension estimate fails for $q<\frac{2d}{d-1}$.  This was  known to the authors of \cite{hamming}, but it provides another example where relevant parameters can be extracted from using our $L^p$ averages approach.

\section*{Acknowledgements}

We thank Marc Carnovale and Ana de Orellana for helpful discussions on Fourier restriction.

% \bibliographystyle{alpha}
% \bibliography{/Users/anaemiliadeorellana/Library/CloudStorage/OneDrive-UniversityofStAndrews/Research/Bibliography.bib}
\end{document}